%
%
%
%
\documentclass{amsart}

\usepackage{amsmath,amssymb,amsfonts,amsbsy}
\usepackage{tikz-cd}
\usepackage{amsbsy}
\usepackage{amscd}
\usepackage{wasysym}
\usetikzlibrary{matrix,arrows,decorations.pathmorphing}
\usepackage{graphicx}
\usepackage{float}

\newcommand{\mc}[1]{\mathcal{#1}}
\newcommand{\e}[1]{\emph{#1}}

\newcommand{\la}{\langle}
\newcommand{\ra}{\rangle}

\newcommand{\rmv}[1]{}

\newcommand{\hs}{\hskip10pt}

\newcommand{\LG}{VN(G)}

\newcommand{\LO}{L^1(G)}
\newcommand{\LOQ}{L^1(\mathbb{G})}
\newcommand{\LOH}{L^{1}(\mathbb{H})}
\newcommand{\LOHH}{L^{1}(\widehat{\mathbb{H}})}
\newcommand{\LOQH}{L^1(\widehat{\mathbb{G}})}

\newcommand{\LTQ}{L^2(\mathbb{G})}
\newcommand{\LTH}{L^2(\mathbb{H})}

\newcommand{\LI}{L^{\infty}(G)}
\newcommand{\LIQ}{L^{\infty}(\mathbb{G})}
\newcommand{\LIH}{L^{\infty}(\mathbb{H})}
\newcommand{\LIQH}{L^{\infty}(\widehat{\mathbb{G}})}
\newcommand{\LIHH}{L^{\infty}(\widehat{\mathbb{H}})}

\newcommand{\LIQHP}{L^{\infty}(\widehat{\mathbb{G}}')}

\newcommand{\BLTQ}{\mc{B}(L^2(\mathbb{G}))}
\newcommand{\BLTH}{\mc{B}(L^2(\mathbb{H}))}

\newcommand{\vphi}{\varphi}

\newcommand{\lm}{\lambda}

\newcommand{\Gam}{\Gamma}
\newcommand{\om}{\omega}

\newcommand{\ten}{\otimes}
\newcommand{\oten}{\overline{\otimes}}
\newcommand{\hten}{\widehat{\otimes}}

\newcommand{\id}{\textnormal{id}}
\newcommand{\h}[1]{\widehat{#1}}
\newcommand{\wh}[1]{\widehat{#1}}

\providecommand{\norm}[1]{\lVert#1\rVert}

\newcommand{\G}{\mathbb{G}}
\newcommand{\Hb}{\mathbb{H}}
\newcommand{\C}{\mathbb{C}}

\newtheorem{thrm}{Theorem}[section]

\newtheorem{cor}[thrm]{Corollary}

\theoremstyle{definition}

\theoremstyle{remark}
\newtheorem{remark}[thrm]{Remark}

\numberwithin{equation}{section}



\begin{document}

\title{On hereditary properties of quantum group amenability}

\author{Jason Crann}
\address{School of Mathematics and Statistics, Carleton University, Ottawa, Ontario, Canada K1S 5B6}
\curraddr{Department of Pure Mathematics, University of Waterloo, Waterloo, Ontario, Canada N2L 3G1}
\curraddr{Institute for Quantum Computing, University of Waterloo, Waterloo, ON, Canada N2L 3G1}
\curraddr{Department of Mathematics and Statistics, University of Guelph, Guelph, ON, Canada N1G 2W1}
\email{jcrann@uwaterloo.ca}

\subjclass[2010]{Primary 46M10, 43A07; Secondary 47L25, 46L89}


\keywords{Locally compact quantum groups, amenability}

\begin{abstract}
Given a locally compact quantum group $\G$ and a closed quantum subgroup $\Hb$, we show that $\G$ is amenable if and only if $\Hb$ is amenable and $\G$ acts amenably on the quantum homogenous space $\G/\Hb$. We also study the existence of $\LOQH$-module projections from $\LIQH$ onto $\LIHH$.
\end{abstract}

\maketitle

\section{Introduction}

Let $G$ be a locally compact group. If $G$ is amenable, it is well-known that any closed subgroup $H$ of $G$ is amenable and that $G$ acts amenably on the homogenous space $G/H$ in the sense that there is a $G$-invariant state $m\in L^{\infty}(G/H)^*$. Conversely, if $H$ is an amenable closed subgroup of $G$ for which $G$ acts amenably on $G/H$, then $G$ is necessarily amenable. We show that this fundamental hereditary property of amenability persists at the level of locally compact quantum groups using a recent homological characterization amenability \cite[Theorem 5.2]{C}, which states that a locally compact quantum group $\G$ is amenable if and only if its dual $\LIQH$ is 1-injective as an operator module over $\LOQH$.


The existence of (completely) bounded $A(G)$-module projections $P:\LG\rightarrow VN(H)$ has been a recent topic of interest in harmonic analysis \cite{DD,Der1,FKLS,KL}. In particular, it was shown in \cite[Theoerem 12]{Der1} that if $H$ is amenable, then a bounded $A(G)$-module projection always exists. We show that this projection may be taken completely contractive, and, in addition, that such a projection exists for a large class of locally compact quantum groups.

\section{Preliminaries}

Let $\mc{A}$ be a complete contractive Banach algebra. We say that an operator space $X$ is a right \e{operator $\mc{A}$-module} if it is a right Banach $\mc{A}$-module such that the module map $m_X:X\hten\mc{A}\rightarrow X$ is completely contractive, where $\hten$ denotes the operator space projective tensor product. We say that $X$ is \e{faithful} if for every non-zero $x\in X$, there is $a\in\mc{A}$ such that $x\cdot a\neq 0$. We denote by $\mathbf{mod}-\mc{A}$ the category of right operator $\mc{A}$-modules with morphisms given by completely bounded module homomorphisms. Left operator $\mc{A}$-modules and operator $\mc{A}$-bimodules are defined similarly, and we denote the respective categories by $\mc{A}-\mathbf{mod}$ and $\mc{A}-\mathbf{mod}-\mc{A}$.

Let $\mc{A}$ be a completely contractive Banach algebra and $X\in\mathbf{mod}-\mc{A}$. The identification $\mc{A}^+=\mc{A}\oplus_1\C$ turns the unitization of $\mc{A}$ into a unital completely contractive Banach algebra, and it follows that $X$ becomes a right operator $\mc{A}^+$-module via the extended action
\begin{equation*}x\cdot(a+\lm e)=x\cdot a+\lm x, \ \ \ a\in\mc{A}^+, \ \lm\in\C, \ x\in X.\end{equation*}
There is a canonical completely contractive morphism $\Delta^+:X\rightarrow\mc{CB}(\mc{A}^+,X)$ given by
\begin{equation*}\Delta^+(x)(a)=x\cdot a, \ \ \ x\in X, \ a\in\mc{A}^+,\end{equation*}
where the right $\mc{A}$-module structure on $\mc{CB}(\mc{A}^+,X)$ is defined by
\begin{equation*}(\Psi\cdot a)(b)=\Psi(ab), \ \ \ a\in\mc{A}, \ \Psi\in\mc{CB}(\mc{A}^+,X), \ b\in\mc{A}^+.\end{equation*}
An analogous construction exists for objects in $\mc{A}-\mathbf{mod}$. For $C\geq 1$, we say that $X$ is \e{relatively $C$-injective} if there exists a morphism $\Phi^+:\mc{CB}(\mc{A}^+,X)\rightarrow X$ such that $\Phi^+\circ\Delta^+=\id_{X}$ and $\norm{\Phi^+}_{cb}\leq C$. When $X$ is faithful, then $X$ is relatively $C$-injective if and only if there exists a morphism $\Phi:\mc{CB}(\mc{A},X)\rightarrow X$ such that $\Phi\circ\Delta=\id_{X}$ and $\norm{\Phi}_{cb}\leq C$ by the operator analogue of \cite[Proposition 1.7]{DP}, where $\Delta(x)(a):=\Delta^+(x)(a)$ for $x\in X$ and $a\in\mc{A}$. We say that $X$ is \e{$C$-injective} if for every $Y,Z\in\mathbf{mod}-\mc{A}$, every completely isometric morphism $\Psi:Y\hookrightarrow Z$, and every morphism $\Phi:Y\rightarrow X$, there exists a morphism $\widetilde{\Phi}:Z\rightarrow X$ such that $\norm{\widetilde{\Phi}}_{cb}\leq C\norm{\Phi}_{cb}$ and $\widetilde{\Phi}\circ\Psi=\Phi$, that is, the following diagram commutes:

\begin{equation*}
\begin{tikzcd}
Z \arrow[rd, dotted, "\widetilde{\Phi}"]\\
Y \arrow[u, hook, "\Psi"] \arrow[r, "\Phi"] &X
\end{tikzcd}
\end{equation*}

A \e{locally compact quantum group} is a quadruple $\G=(\LIQ,\Gam,\vphi,\psi)$, where $\LIQ$ is a Hopf-von Neumann algebra with co-multiplication $\Gam:\LIQ\rightarrow\LIQ\oten\LIQ$, and $\vphi$ and $\psi$ are fixed left and right Haar weights on $\LIQ$, respectively \cite{KV2,V}. For every locally compact quantum group $\G$, there exists a \e{left fundamental unitary operator} $W$ on $L_2(\G,\vphi)\ten L_2(\G,\vphi)$ and a \e{right fundamental unitary operator} $V$ on $L_2(\G,\psi)\ten L_2(\G,\psi)$ implementing the co-multiplication $\Gam$ via
\begin{equation*}\Gam(x)=W^*(1\ten x)W=V(x\ten 1)V^*, \ \ \ x\in\LIQ.\end{equation*}
Both unitaries satisfy the \e{pentagonal relation}; that is,
\begin{equation}\label{penta}W_{12}W_{13}W_{23}=W_{23}W_{12}\hs\hs\mathrm{and}\hs\hs V_{12}V_{13}V_{23}=V_{23}V_{12}.\end{equation}
By \cite[Proposition 2.11]{KV2}, we may identify $L_2(\G,\vphi)$ and $L_2(\G,\psi)$, so we will simply use $\LTQ$ for this Hilbert space throughout the paper. We denote by $R$ the unitary antipode of $\G$.

Let $\LOQ$ denote the predual of $\LIQ$. Then the pre-adjoint of $\Gam$ induces an associative completely contractive multiplication on $\LOQ$, defined by
\begin{equation*}\star:\LOQ\hten\LOQ\ni f\ten g\mapsto f\star g=\Gam_*(f\ten g)\in\LOQ.\end{equation*}
The canonical $\LOQ$-bimodule structure on $\LIQ$ is given by
\begin{equation*}f\star x=(\id\ten f)\Gam(x), \ \ \ x\star f=(f\ten\id)\Gam(x), \ \ \ x\in\LIQ, \ f\in\LOQ.\end{equation*}
A \e{Left invariant mean on $\LIQ$} is a state $m\in \LIQ^*$ satisfying
\begin{equation}\label{leftinv}\la m,x\star f\ra=\la f,1\ra\la m,x\ra, \ \ \ x\in\LIQ, \ f\in\LOQ.\end{equation}
Right and two-sided invariant means are defined similarly. A locally compact quantum group $\G$ is said to be \e{amenable} if there exists a left invariant mean on $\LIQ$. It is known that $\G$ is amenable if and only if there exists a right (equivalently, two-sided) invariant mean (cf. \cite[Proposition 3]{DQV}). We say that $\G$ is \e{co-amenable} if $\LOQ$ has a bounded left (equivalently, right or two-sided) approximate identity (cf. \cite[Theorem 3.1]{BT}).

The \e{left regular representation} $\lm:\LOQ\rightarrow\BLTQ$ of $\G$ is defined by
\begin{equation*}\lm(f)=(f\ten\id)(W), \ \ \ f\in\LOQ,\end{equation*}
and is an injective, completely contractive homomorphism from $\LOQ$ into $\BLTQ$. Then $\LIQH:=\{\lm(f) : f\in\LOQ\}''$ is the von Neumann algebra associated with the dual quantum group $\h{\G}$. Analogously, we have the \e{right regular representation} $\rho:\LOQ\rightarrow\BLTQ$ defined by
\begin{equation*}\rho(f)=(\id\ten f)(V), \ \ \ f\in\LOQ,\end{equation*}
which is also an injective, completely contractive homomorphism from $\LOQ$ into $\BLTQ$. Then $\LIQHP:=\{\rho(f) : f\in\LOQ\}''$ is the von Neumann algebra associated to the quantum group $\h{\G}'$. It follows that $\LIQHP=\LIQH'$, and the left and right fundamental unitaries satisfy $W\in\LIQ\oten\LIQH$ and $V\in\LIQHP\oten\LIQ$ \cite[Proposition 2.15]{KV2}. Moreover, dual quantum groups always satisfy $\LIQ\cap\LIQH=\LIQ\cap\LIQHP=\C1$ \cite[Proposition 3.4]{VD}.

If $G$ is a locally compact group, we let $\G_a=(\LI,\Gam_a,\vphi_a,\psi_a)$ denote the \e{commutative} quantum group associated with the commutative von Neumann algebra $\LI$, where the co-multiplication is given by $\Gam_a(f)(s,t)=f(st)$, and $\vphi_a$ and $\psi_a$ are integration with respect to a left and right Haar measure, respectively. The dual $\h{\G}_a$ of $\G_a$ is the \e{co-commutative} quantum group $\G_s=(\LG,\Gam_s,\vphi_s,\psi_s)$, where $\LG$ is the left group von Neumann algebra with co-multiplication $\Gam_s(\lm(t))=\lm(t)\ten\lm(t)$, and $\vphi_s=\psi_s$ is Haagerup's Plancherel weight. Then $L^1(\G_a)$ is the usual group convolution algebra $\LO$, and $L^1(\G_s)$ is the Fourier algebra $A(G)$. A commutative quantum group is amenable if and only if its underlying locally compact group is amenable, while co-commutative quantum groups are always amenable.


For general locally compact quantum groups $\G$, the left and right fundamental unitaries yield canonical extensions of the co-multiplication given by
\begin{align*}&\Gam^l:\BLTQ\ni T\mapsto W^*(1\ten T)W\in\LIQ\oten\BLTQ;\\
&\Gam^r:\BLTQ\ni T\mapsto V(T\ten 1)V^*\in\BLTQ\oten\LIQ.\end{align*}
In turn, we obtain a canonical $\LOQ$-bimodule structure on $\BLTQ$ with respect to the actions
$$f\rhd T=(\id\ten f)\Gam^r(T), \ \ \ T\lhd f=(f\ten\id)\Gam^l(T) \ \ \ f\in\LOQ, \ T\in\BLTQ.$$

\section{Closed Quantum Subgroups and Amenability}

Let $\G$ and $\Hb$ be two locally compact quantum groups. Then $\Hb$ is said to be a \e{closed quantum subgroup of $\G$ in the sense of Vaes} if there exists a normal, unital, injective *-homomorphism $\gamma:\LIHH\rightarrow\LIQH$ satisfying \begin{equation}\label{e:CQS}(\gamma\ten\gamma)\circ\Gam_{\h{\Hb}}=\Gam_{\h{\G}}\circ\gamma.\end{equation}
This is not the original definition of Vaes \cite[Definition 2.5]{V2}, but was shown to be equivalent in \cite[Theorem 3.3]{DKSS}.
With this definition, we have an analog of the Herz restriction theorem \cite{Herz} for quantum groups, that is, $\gamma_*:\LOQH\rightarrow\LOHH$ is a complete quotient map \cite[Theorem 3.7]{DKSS}. Indeed, if $\G$ and $\Hb$ are commutative, with underlying locally compact groups $G$ and $H$, the map $\gamma$ is nothing but the canonical inclusion
$$VN(H)\ni\lm_H(s)\mapsto\lm_G(s)\in VN(G),$$
where $\lm_H$ and $\lm_G$ are the left regular representations of $H$ and $G$, respectively. Its pre-adjoint $\gamma_*$ is then the canonical quotient map $A(G)\rightarrow A(H)$ arising from the classical Herz restriction theorem.

\begin{remark} There is an a priori weaker notion of closed quantum subgroup of a locally compact quantum group due to Woronowicz \cite[Definition 3.2]{DKSS}. In what follows, we restrict ourselves to Vaes' definition, so that a \textit{closed quantum subgroup} of a locally compact quantum group will always refer to the definition (\ref{e:CQS}) given above.
\end{remark}

Given a closed quantum subgroup $\Hb$ of a locally compact quantum group $\G$, we let $L^\infty(\G/\Hb):=\gamma(\LIHH)'\cap\LIQ$ represent the quantum homogenous space $\G/\Hb$. By the left version of \cite[Proposition 3.5]{KS} it follows that $\Gam(L^\infty(\G/\Hb))\subseteq L^\infty(\G/\Hb)\oten\LIQ$. Hence, $L^\infty(\G/\Hb)$ is a left operator $\LOQ$-submodule of $\LIQ$ under the action
$$f\star x=(\id\ten f)\Gam(x), \ \ \ x\in L^\infty(\G/\Hb), \ f\in\LOQ.$$
We say that $\G$ \textit{acts amenably on $\G/\Hb$} if there exists a state $m\in L^\infty(\G/\Hb)^*$ satisfying
$$\la m, f\star x\ra=\la f,1\ra\la m,x\ra, \ \ \ x\in L^\infty(\G/\Hb), \ f\in\LOQ.$$

Let $W^{\Hb}\in\LIH\oten\LIHH$ be the left fundamental unitary of $\Hb$. By the left version of \cite[Theorem 3.3]{DKSS} (see also \cite[Lemma 3.11]{KS}) it follows that the unitary $(\id\ten\gamma)(W^{\Hb})\in\LIH\oten\LIQH$ defines a co-action of $\Hb$ on $\G$ by
$$\alpha:\LIQ\ni x\mapsto(\id\ten\gamma)(W^{\Hb})^*(1_{\LIH}\ten x)(\id\ten\gamma)(W^{\Hb})\in\LIH\oten\LIQ,$$
where $\alpha$ satisfies $(\id\ten\alpha)\circ\alpha=(\Gam_{\Hb}\ten\id)\circ\alpha$. This co-action defines a right operator $\LOH$-module structure on $\LIQ$ by
$$x \star_{\Hb} g=(g\ten\id)\alpha(x), \ \ \ x\in\LIQ, \ g\in\LOH.$$
Moreover, the quantum homogenous space $L^{\infty}(\G/\Hb)$ is precisely the fixed point algebra $\LIQ^\alpha=\{x\in\LIQ\mid\alpha(x)=1\ten x\}$ \cite[Lemma 3.11]{KS}. Let $\alpha^l:\BLTQ\rightarrow\LIH\oten\BLTQ$ denote the canonical extension of $\alpha$, given by
$$\alpha^l(T)=(\id\ten\gamma)(W^{\Hb})^*(1_{\LIH}\ten T)(\id\ten\gamma)(W^{\Hb}), \ \ \ T\in\BLTQ.$$
We then obtain an extended right $\LOH$-module action on $\BLTQ$.

\begin{thrm}\label{t:amenablesubgroup} Let $\G$ and $\Hb$ be two locally compact quantum groups such that $\Hb$ is a closed quantum subgroup of $\G$. Then $\G$ is amenable if and only if $\Hb$ is amenable and $\G$ acts amenably on $\G/\Hb$.\end{thrm}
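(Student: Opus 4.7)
The proof has two directions.

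For the converse ($\Leftarrow$), given a right invariant mean $n$ on $\LIH$ and an $\LOQ$-invariant state $m$ on $L^{\infty}(\G/\Hb)$, I would construct the averaging operator $P := (n\ten\id)\circ\alpha:\LIQ\to\LIQ$ and set $M := m\circ P$. Using the coaction identity $(\id\ten\alpha)\alpha=(\Gam_{\Hb}\ten\id)\alpha$ together with right invariance $(n\ten\id)\Gam_{\Hb}=n(\cdot)1$, a standard slicing argument gives $\alpha\circ P=1\ten P$, so $P$ takes values in $L^{\infty}(\G/\Hb)=\LIQ^{\alpha}$. Since $\alpha$ comes from a closed quantum subgroup, it is compatible with $\Gam_{\G}$ via $(\alpha\ten\id)\Gam_{\G}=(\id\ten\Gam_{\G})\alpha$, from which $(P\ten\id)\Gam_{\G}=\Gam_{\G}\circ P$; combined with the invariance $(m\ten\id)\Gam_{\G}(x)=m(x)1$ for $x\in L^{\infty}(\G/\Hb)$, this forces $(M\ten\id)\Gam_{\G}(y)=M(y)1$, so $M$ is a right invariant mean on $\LIQ$ and $\G$ is amenable.

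For the forward direction ($\Rightarrow$), let $M$ be a right invariant mean on $\LIQ$. The action on $\G/\Hb$ is automatically amenable: the restriction $M|_{L^{\infty}(\G/\Hb)}$ is $\LOQ$-invariant, which follows directly from the right invariance $(M\ten\id)\Gam_{\G}=M(\cdot)1$ after pairing with $f\in\LOQ$.

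The substantive content is the amenability of $\Hb$. I would invoke the homological characterization \cite[Theorem~5.2]{C}: $\G$ is amenable iff $\LIQH$ is $1$-injective as a right operator $\LOQH$-module, aiming to deduce $1$-injectivity of $\LIHH$ as a right operator $\LOHH$-module. The map $\gamma:\LIHH\hookrightarrow\LIQH$ is a completely isometric $\LOQH$-module embedding once $\LIHH$ is pulled back along the complete quotient $\gamma_{*}:\LOQH\to\LOHH$, and because $\gamma_{*}$ is surjective, $\LOQH$-module morphisms between pullbacks coincide with $\LOHH$-morphisms. Given a completely isometric embedding $Y\hookrightarrow Z$ in $\mathbf{mod}-\LOHH$ and a morphism $\Phi:Y\to\LIHH$, composing with $\gamma$ and extending through the $1$-injectivity of $\LIQH$ produces $\tilde\Psi:Z\to\LIQH$; projecting back to $\gamma(\LIHH)$ via a completely contractive $\LOQH$-module projection $E:\LIQH\to\gamma(\LIHH)$ yields the required extension $\gamma^{-1}\circ E\circ\tilde\Psi:Z\to\LIHH$ of $\Phi$.

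The main obstacle is producing the projection $E$ from the hypothesis that $\G$ is amenable. This is the companion problem addressed separately in the paper (the existence of $\LOQH$-module projections from $\LIQH$ onto $\LIHH$), and I would expect it to come from averaging the invariant mean on $\LIQ$ against the structure induced by the subgroup inclusion on the dual side, in direct analogy with the construction of $P$ in the converse direction.
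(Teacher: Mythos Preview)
Your converse direction is essentially the paper's argument: the paper builds the same averaging map $P=(m_{\Hb}\ten\id)\circ\alpha$, checks it lands in $L^\infty(\G/\Hb)$ via the coaction identity, and verifies the $\LOQ$-module property directly from the unitary implementation $V\in\LIQHP\oten\LIQ$ (your formulation via $(\alpha\ten\id)\Gam_{\G}=(\id\ten\Gam_{\G})\alpha$ is the same fact in coaction form). The easy half of the forward direction (restricting the mean to $L^\infty(\G/\Hb)$) is also identical.

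The gap is in your route to amenability of $\Hb$. Your plan reduces everything to the existence of a completely contractive $\LOQH$-module projection $E:\LIQH\to\gamma(\LIHH)$, but you do not construct $E$; you only say you ``would expect it to come from averaging the invariant mean on $\LIQ$''. This is precisely the companion problem treated later in the paper (Theorem~\ref{t:subgroup} and its corollary), and there the construction of such a projection uses \emph{both} a left invariant mean on $\LIQH$ (i.e.\ amenability of $\h{\G}$, not of $\G$) \emph{and} amenability of $\Hb$. So invoking that result here would be circular, and the ``averaging'' you suggest does not obviously produce $E$ from a mean on $\LIQ$ alone.

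The paper avoids this obstacle by a different maneuver. Instead of projecting $\LIQH$ onto $\gamma(\LIHH)$, it uses $1$-injectivity of $\LIQH$ in $\LOQH\!-\!\mathbf{mod}$ to extend the embedding $\gamma:\LIHH\hookrightarrow\LIQH$ to a completely contractive $\LOQH$-module map $\widetilde{\gamma}:\BLTH\to\LIQH$. Since $\widetilde{\gamma}$ extends a $*$-homomorphism, $\LIHH$ lies in its multiplicative domain, so $\widetilde{\gamma}$ is automatically an $\LIHH$-bimodule map; an Effros--Kishimoto approximation then upgrades this to a right $\LOH$-module map with respect to the $\alpha^l$-action. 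Finally, for $x\in\LIH$ the left $\LOQH$-module property forces $\Gam_{\h{\G}}(\widetilde{\gamma}(x))=\widetilde{\gamma}(x)\ten 1$, so $\widetilde{\gamma}(x)\in\LIQH\cap\LIQ=\C1$, and $\widetilde{\gamma}|_{\LIH}$ is the desired invariant mean. The key idea you are missing is this multiplicative-domain step, which manufactures the $\LOH$-equivariance for free and bypasses the need for $E$.
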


\begin{proof} If $\Hb$ is amenable and $\G$ acts amenably on $\G/\Hb$, there are states $m_{\Hb}\in\LIH^*$ and $m_{\G}\in L^\infty(\G/\Hb)^*$ which are invariant under the canonical $\LOH$ and $\LOQ$ actions on $\LIH$ and $L^\infty(\G/\Hb)$, respectively. In particular, $m_{\Hb}$ satisfies
$$(m_{\Hb}\ten\id)\Gam_{\Hb}(y)=\la m_{\Hb}, y\ra1, \ \ \ y\in\LIH.$$
Define the completely positive map $P:\LIQ\rightarrow\LIQ$ by
$$P(x)=(m_{\Hb}\ten\id)\alpha(x), \ \ \ x\in\LIQ.$$
Then, on the one hand,
\begin{align*}\alpha(P(x))&=\alpha((m_{\Hb}\ten\id)\alpha(x))=(m_{\Hb}\ten\id\ten\id)(\id\ten\alpha)\alpha(x)\\
&=(m_{\Hb}\ten\id\ten\id)(\Gam_{\Hb}\ten\id)\alpha(x)=1\ten ((m_{\Hb}\ten\id)\alpha(x))\\
&=1\ten P(x),\end{align*}
so $P(x)\in L^\infty(\G/\Hb)$. On the other hand, since $(\id\ten\gamma)(W^{\Hb})\in\LIH\oten\gamma(\LIQH)$, it follows that $P(axb)=aP(x)b$ for all $a,b\in L^\infty(\G/\Hb)=\gamma(\LIHH)'\cap\LIQ$. Thus, $P:\LIQ\rightarrow\LIQ$ is a completely positive projection onto $L^\infty(\G/\Hb)$. Moreover, as $V\in\LIQHP\oten\LIQ$, we have $(\id\ten\gamma)(W^{\Hb})_{12}V_{23}=V_{23}(\id\ten\gamma)(W^{\Hb})_{12}$, so that
\begin{align*}P(f\star_{\G} x)&=(m_{\Hb}\ten\id)\alpha((\id\ten f)V(x\ten 1)V^*)\\
&=(m_{\Hb}\ten\id)(\id\ten\id\ten f)(\alpha\ten\id)(V(x\ten 1)V^*)\\
&=(m_{\Hb}\ten\id)(\id\ten\id\ten f)((\id\ten\gamma)(W^{\Hb})_{12}^*V_{23}(x\ten 1)_{23}V_{23}^*(\id\ten\gamma)(W^{\Hb})_{12})\\
&=(m_{\Hb}\ten\id)(\id\ten\id\ten f)(V_{23}(\id\ten\gamma)(W^{\Hb})_{12}^*(x\ten 1)_{23}(\id\ten\gamma)(W^{\Hb})_{12}V_{23}^*)\\
&=(m_{\Hb}\ten\id)(\id\ten\id\ten f)(V_{23}(\alpha(x)\ten 1)V_{23}^*)\\
&=(\id\ten f)(V(P(x)\ten 1)V^*)\\
&=f\star_{\G}P(x),\end{align*}
for all $f\in\LOQ$ and $x\in\LIQ$. Thus, $P$ is a left $\LOQ$-module map. Defining $m:=m_{\G}\circ P\in\LIQ^*$, we obtain a right invariant mean on $\LIQ$, whence $\G$ is amenable.

Conversely, suppose that $\G$ is amenable. Clearly the restriction of a right invariant mean to $L^{\infty}(\G/\Hb)$ will be $\LOQ$-invariant, so $\G$ acts amenably on $\G/\Hb$. We show that $\Hb$ is amenable.

By \cite[Theorem 5.2]{C} $\LIQH$ is $1$-injective in $\LOQH-\mathbf{mod}$. The space $\BLTH=\mc{B}(L^2(\wh{\Hb}))$ becomes a left operator $\LOQH$-module via:
\begin{equation}\label{e:module}\hat{f}\rhd_{\h{\G}} T=(\id\ten\gamma_*(\hat{f}))V^{\h{\Hb}}(T\ten 1)V^{\h{\Hb}^*}, \ \ \ \hat{f}\in\LOQH, \ T\in\BLTH,\end{equation}
where $V^{\h{\Hb}}\in\LIH'\oten\LIHH$ is the right fundamental unitary of $\h{\Hb}$. Clearly, $\LIHH$ is an $\LOQH$-submodule of $\BLTH$ and $\gamma:\LIHH\rightarrow\LIQH$ is a left $\LOQH$-module map. Thus, we may extend $\gamma$ to a completely contractive left $\LOQH$-module map $\widetilde{\gamma}:\BLTH\rightarrow\LIQH$. Then  $\widetilde{\gamma}$ is a unital complete contraction and therefore completely positive. Moreover, $\LIHH$ is contained in the multiplicative domain of $\widetilde{\gamma}$ (as it extends a *-homomorphism), so the bimodule property of completely positive maps over their multiplicative domains (cf. \cite{Choi}) ensures that
$$\widetilde{\gamma}(\hat{x}T\hat{y})=\gamma(\hat{x})\widetilde{\gamma}(T)\gamma(\hat{y}), \ \ \ \hat{x},\hat{y}\in\LIHH, \ T\in\BLTH.$$
Thus, $\widetilde{\gamma}$ is a completely bounded $\LIHH$-$\gamma(\LIHH)$ bimodule map between $\BLTH$ and $\BLTQ$. By \cite[Theorem 2.5]{EK} we can approximate $\widetilde{\gamma}$ in the point weak* topology by a net $\widetilde{\gamma}_i$ of normal completely bounded $\LIHH$-$\gamma(\LIHH)$ bimodule maps. Since $\LIH$ is standardly represented on $\LTH$, for each $g\in\LOH$ there exist $\xi,\eta\in\LTH$ such that $g=\om_{\xi,\eta}|_{\LIH}$. Taking an orthonormal basis $(e_j)$ of $\LTH$ it follows that
$$T\lhd g=(\om_{\xi,\eta}\ten\id)W^{\Hb^*}(1\ten T)W^{\Hb}=\sum_j(\om_{e_j,\eta}\ten\id)(W^{\Hb^*})T(\om_{\xi,e_j}\ten\id)(W^{\Hb}),$$
for $T\in\BLTH$, where the series converges in the weak* topology. Thus,
\begin{align*}\widetilde{\gamma}(T\lhd_{\Hb} g)&=\lim_i\widetilde{\gamma}_i(T\lhd_{\Hb}g)\\
&=\lim_i\widetilde{\gamma}_i\bigg(\sum_j(\om_{e_j,\eta}\ten\id)(W^{\Hb^*})T(\om_{\xi,e_j}\ten\id)(W^{\Hb})\bigg)\\
&=\lim_i\sum_j\widetilde{\gamma}_i((\om_{e_j,\eta}\ten\id)(W^{\Hb^*})T(\om_{\xi,e_j}\ten\id)(W^{\Hb}))\\
&=\lim_i\sum_j\gamma((\om_{e_j,\eta}\ten\id)(W^{\Hb^*}))\widetilde{\gamma}_i(T)\gamma((\om_{\xi,e_j}\ten\id)(W^{\Hb}))\\
&=\lim_i\sum_j(\om_{e_j,\eta}\ten\id)((\id\ten\gamma)(W^{\Hb^*}))\widetilde{\gamma}_i(T)(\om_{\xi,e_j}\ten\id)((\id\ten\gamma)(W^{\Hb}))\\
&=\lim_i(\om_{\xi,\eta}\ten\id)(\id\ten\gamma)(W^{\Hb})^*(1\ten\widetilde{\gamma}_i(T))(\id\ten\gamma)(W^{\Hb})\\
&=\lim_i(g\ten\id)\alpha^l(\widetilde{\gamma}_i(T))\\
&=(g\ten\id)\alpha^l(\widetilde{\gamma}(T))\\
&=\widetilde{\gamma}(T)\lhd_{\Hb} g.\end{align*}
Thus, $\widetilde{\gamma}$ is a right $\LOH$-module map.

Since $\widetilde{\gamma}$ is also a left $\LOQH$-module map, for $x\in\LIH$ and $\hat{f},\hat{g}\in\LOQH$ we have
\begin{align*}\la\Gam_{\h{\G}}(\widetilde{\gamma}(x)),\hat{f}\ten\hat{g}\ra&=\la\widetilde{\gamma}(x),\hat{f}\star_{\h{\G}}\hat{g}\ra =\la\hat{g}\rhd_{\h{\G}}\widetilde{\gamma}(x),\hat{f}\ra\\
&=\la\widetilde{\gamma}(\hat{g}\rhd_{\h{\G}}x),\hat{f}\ra=\la\hat{g},1\ra\la\widetilde{\gamma}(x),\hat{f}\ra\\
&=\la\widetilde{\gamma}(x)\ten 1,\hat{f}\ten\hat{g}\ra.
\end{align*}
The standard argument then shows that $\widetilde{\gamma}(x)\in\LIQH\cap\LIQ=\C1$. Hence, $\widetilde{\gamma}|_{\LIH}$ defines a left invariant mean on $\LIH$, and $\Hb$ is amenable.
\end{proof}

\begin{remark} Theorem \ref{t:amenablesubgroup} yields a new proof of the classical fact that amenability of a locally compact group passes to closed subgroups \textit{without} the use of Bruhat functions or Leptin's theorem \cite{Lep} on the equivalence of amenability of $G$ and the existence of a bounded approximate identity in the Fourier algebra $A(G)$.\end{remark}

For a locally compact quantum group $\G$, the set of unitary elements $u\in\LIQ$ satisfying $\Gam(u)=u\ten u$ forms a locally compact group under the relative weak* topology, called the \e{intrinsic group} of $\G$, and is denoted $\mathrm{Gr}(\G)$. The \e{character group} of $\G$ is defined as $\widetilde{\G}:=\mathrm{Gr}(\widehat{\G})$. It follows that $\widetilde{\G_a}\cong G$, its underlying locally compact group, and $\widetilde{\G_s}\cong\widehat{G}$, the group of continuous characters on $G$. For more details on the character group and properties of the assignment $\G\mapsto\widetilde{\G}$ we refer the reader to \cite{D2,KN2}. Since $\widetilde{\G}$ is always a closed quantum subgroup of $\G$ \cite[Theorem 5.5]{D2}, we obtain the following generalization of \cite[Theorem 5.14]{KN2} beyond discrete quantum groups. We remark that the same conclusion was obtained in \cite[Theorem 5.6]{D2} under the a priori stronger hypothesis that $\widehat{\G}$ is co-amenable.

\begin{cor} Let $\G$ be a locally compact quantum group. If $\G$ is amenable then $\widetilde{\G}$ is amenable.\end{cor}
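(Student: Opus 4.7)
The statement is an immediate corollary of Theorem~\ref{t:amenablesubgroup}, so the plan is essentially to invoke that theorem and check that its hypotheses apply.

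First, I would recall that the character group $\widetilde{\G} = \mathrm{Gr}(\widehat{\G})$ is always a closed quantum subgroup of $\G$, by \cite[Theorem 5.5]{D2}, as already noted in the paragraph preceding the corollary. This means there is a normal, unital, injective $*$-homomorphism $\gamma : L^\infty(\widehat{\widetilde{\G}}) \to \LIQH$ intertwining the coproducts, placing $\widetilde{\G}$ in the framework of Theorem~\ref{t:amenablesubgroup}.

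Next, I would apply Theorem~\ref{t:amenablesubgroup} with $\Hb = \widetilde{\G}$. Since $\G$ is assumed amenable, the forward direction of that equivalence yields two conclusions: that $\widetilde{\G}$ is amenable and that $\G$ acts amenably on $\G/\widetilde{\G}$. Only the first is needed here, and the proof is complete.

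There is no real obstacle: the whole content lies in having the hereditary theorem already established in the general locally compact quantum group setting, together with the nontrivial fact that the character group is always a Vaes-closed quantum subgroup. One could also briefly contrast this with \cite[Theorem 5.6]{D2}, which required the stronger hypothesis that $\widehat{\G}$ be co-amenable: the present argument removes that hypothesis precisely because Theorem~\ref{t:amenablesubgroup} addresses arbitrary closed quantum subgroups rather than relying on co-amenability of the dual.
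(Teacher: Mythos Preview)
Your proposal is correct and matches the paper's approach exactly: the corollary is stated without a separate proof, being an immediate consequence of Theorem~\ref{t:amenablesubgroup} together with the fact (from \cite[Theorem 5.5]{D2}) that $\widetilde{\G}$ is always a closed quantum subgroup of $\G$.
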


We now study the existence of completely contractive $\LOQH$-module projections $P:\LIQH\rightarrow\LIHH$. Our main tool is the following refinement of \cite[Theorem 5.1]{C}.

\begin{thrm}\label{t:subgroup} Let $\G$ and $\Hb$ be locally compact quantum groups such that $\h{\G}$ is amenable and $\Hb$ is a closed quantum subgroup of $\G$. Then $\Hb$ is amenable if and only if $\LIHH$ is 1-injective in $\mathbf{mod}-\LOQH$.\end{thrm}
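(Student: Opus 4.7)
Plan:

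I would prove the two implications separately.

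$(\Leftarrow)$ Suppose $\LIHH$ is $1$-injective in $\mathbf{mod}-\LOQH$. The compatibility $(\gamma\ten\gamma)\circ\Gam_{\h{\Hb}}=\Gam_{\h{\G}}\circ\gamma$ from \eqref{e:CQS} makes $\gamma:\LIHH\hookrightarrow\LIQH$ a completely isometric right $\LOQH$-module inclusion (since then $\gamma(\hat y\star_{\h{\Hb}}\gamma_*(\hat f))=\gamma(\hat y)\star_{\h{\G}}\hat f$), so applying $1$-injectivity to $\id_{\LIHH}$ yields a completely contractive right $\LOQH$-module projection $E:\LIQH\to\LIHH$ with $E\circ\gamma=\id_{\LIHH}$. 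To manufacture an invariant mean on $\LIH$, I would parallel the converse direction of Theorem~\ref{t:amenablesubgroup}: the hypothesis that $\h{\G}$ is amenable gives, by \cite[Theorem 5.2]{C} applied to $\h{\G}$, that $\LIQ$ is $1$-injective in $\mathbf{mod}-\LOQ$. Combining this auxiliary injectivity with $E$ and the fundamental unitary $W^{\Hb}\in\LIH\oten\LIHH$, I would extend a natural map on $\LIH$ to a completely bounded bimodule map on $\BLTH$, and then, by a computation analogous to the one at the end of the proof of Theorem~\ref{t:amenablesubgroup}, show that its restriction to $\LIH$ takes scalar values, producing the desired mean.

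$(\Rightarrow)$ Assume $\Hb$ is amenable. By \cite[Theorem 5.2]{C} applied to $\Hb$, $\LIHH$ is $1$-injective in $\mathbf{mod}-\LOHH$, giving a completely contractive $\LOHH$-module splitting $\Phi_{\Hb}^+:\mc{CB}(\LOHH^+,\LIHH)\to\LIHH$ of $\Delta_{\Hb}^+$. Since the right $\LOQH$-module structure on $\LIHH$ factors through the complete quotient $\gamma_*:\LOQH\to\LOHH$, one has $\Delta_{\G}^+=(\gamma_*^+)^*\circ\Delta_{\Hb}^+$, where the precomposition map $(\gamma_*^+)^*:\mc{CB}(\LOHH^+,\LIHH)\hookrightarrow\mc{CB}(\LOQH^+,\LIHH)$ is a completely isometric right $\LOQH$-module embedding. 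My plan is to construct a completely contractive right $\LOQH$-module retraction $R$ of $(\gamma_*^+)^*$ by averaging with an invariant mean $m_{\Hb}\in\LIH^*$ (available from $\Hb$-amenability), using the co-action $\alpha$ of $\Hb$ on $\G$ exactly as in the construction of the completely positive projection $P:\LIQ\to L^{\infty}(\G/\Hb)$ of Theorem~\ref{t:amenablesubgroup}. The composition $\Phi_{\G}^+:=\Phi_{\Hb}^+\circ R$ is then a completely contractive right $\LOQH$-module splitting of $\Delta_{\G}^+$, witnessing $1$-injectivity of $\LIHH$ in $\mathbf{mod}-\LOQH$.

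The main obstacle is the forward direction: verifying that the averaged retraction $R$ is genuinely a completely contractive right $\LOQH$-module map, which amounts to a delicate bimodule computation using the intertwining between $W^{\Hb}$ and the right fundamental unitary of $\G$ together with the invariance of $m_{\Hb}$, in the spirit of the string of equalities computing $P(f\star_{\G}x)=f\star_{\G}P(x)$ in Theorem~\ref{t:amenablesubgroup}. The hypothesis that $\h{\G}$ is amenable enters most transparently in the backward direction, where it supplies the auxiliary $\LOQ$-module injectivity on the primal side of $\G$ needed to bridge the dual-side projection $E$ to a scalar-valued mean on $\LIH$.
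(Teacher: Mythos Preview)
Your plan has the role of the hypothesis ``$\h{\G}$ amenable'' reversed, and this causes a genuine gap in the forward direction.

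\textbf{Forward direction.} Under the identification $\mc{CB}(\LOQH,\LIHH)\cong\LIQH\oten\LIHH$, the embedding $(\gamma_*)^*$ becomes $\gamma\ten\id$, so your retraction $R$ amounts to a completely contractive right $\LOQH$-module projection of $\LIQH\oten\LIHH$ onto $\gamma(\LIHH)\oten\LIHH$ (essentially a conditional expectation $\LIQH\to\gamma(\LIHH)$ in the first leg, intertwining the $\Gam_{\h{\G}}$-action). But the tools you invoke---the invariant mean $m_{\Hb}\in\LIH^*$ and the co-action $\alpha$ of $\Hb$ on $\LIQ$---live on the \emph{primal} side: $\alpha$ maps $\LIQ$ into $\LIH\oten\LIQ$, and averaging by $m_{\Hb}$ produces a projection $\LIQ\to L^\infty(\G/\Hb)$. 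Neither object sees $\LIQH$ at all, so there is no mechanism to build $R$ from them. (Indeed, the existence of a $\LOQH$-module projection $\LIQH\to\gamma(\LIHH)$ is the content of the \emph{corollary} to this theorem and uses both hypotheses.) The paper instead uses the amenability of $\h{\G}$ here: a left invariant mean $m\in\LIQH^*$ lets one define $\Phi(X)=E((m\ten\id)(UXU^*))$ on $\LIQH\oten\LIHH$, with $U=(\gamma\ten\id)(W^{\h{\Hb}})$ and $E:\BLTH\to\LIHH$ the $\LOHH$-module projection coming from amenability of $\Hb$. The pentagonal relation for $W^{\h{\G}}$ and left invariance of $m$ then force $\Phi$ to be a right $\LOQH$-module left inverse to $\Delta=(\gamma\ten\id)\circ\Gam_{\h{\Hb}}$, and \cite[Proposition 2.3]{C} upgrades relative $1$-injectivity to $1$-injectivity.

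\textbf{Backward direction.} This is much shorter than you suggest, and does not use $\h{\G}$ amenable. Since the right $\LOQH$-module structure on $\BLTH$ is defined through $\gamma_*$ (equation \eqref{e:module}), $1$-injectivity of $\LIHH$ in $\mathbf{mod}-\LOQH$ gives, by extending $\id_{\LIHH}$ along the inclusion $\LIHH\hookrightarrow\BLTH$, a completely contractive right $\LOQH$-module projection $E:\BLTH\to\LIHH$. Surjectivity of $\gamma_*$ makes $E$ automatically a right $\LOHH$-module map, and then \cite[Proposition 4.7]{C} yields amenability of $\Hb$ directly. There is no need to pass through $\LIQH$, bimodule extensions, or $1$-injectivity of $\LIQ$.
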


\begin{proof} The canonical faithful right $\LOQH$-module action on $\LIHH$ is given by
$$\hat{x}\star_{\h{\G}}\hat{f}=\hat{x}\star_{\h{\Hb}}\gamma_*(\hat{f}), \ \ \ \hat{x}\in\LIHH, \ \hat{f}\in\LOQH.$$
The embedding $\Delta:\LIHH\rightarrow\mc{CB}(\LOQH,\LIHH)$ is then
$$\Delta(\hat{x})(\hat{f})=\hat{x}\star_{\h{\G}}\hat{f}, \ \ \ \hat{x}\in\LIHH, \ \hat{f}\in\LOQH.$$
Under the canonical identification $\mc{CB}(\LOQH,\LIHH)=\LIQH\oten\LIHH$, one easily sees that $\Delta=(\gamma\ten\id)\circ\Gam_{\h{\G}}$ and that the pertinent $\LOQH$-module structure on $\LIQH\oten\LIHH$ is
$$X\star_{\h{\G}}\hat{f}=(\hat{f}\ten\id\ten\id)(\Gam_{\h{\G}}\ten\id)(X), \ \ \ X\in\LIQH\oten\LIHH, \ \hat{f}\in\LOQH.$$
Let $U:=(\gamma\ten\id)(W^{\h{\Hb}})\in\LIQH\oten\LIH$, where $W^{\h{\Hb}}$ is the left fundamental unitary of $\h{\Hb}$. The pentagonal relation (\ref{penta}) together with equation (\ref{e:CQS}) imply
\begin{equation}\label{e:U}U_{23}W_{12}^{\h{\G}^*}=U_{13}^*W_{12}^{\h{\G}^*}U_{23},\end{equation}
where $W^{\h{\G}}$ is the left fundamental unitary of $\h{\G}$. By \cite[Theorem 5.1]{C}, amenability of $\Hb$ entails the existence of a completely contractive right $\LOHH$-module projection $E:\BLTH\rightarrow\LIHH$ with respect to the action
$$T\lhd_{\h{\Hb}}\hat{f}=(\hat{f}\ten\id)W^{\h{\Hb}^*}(1\ten T)W^{\h{\Hb}}, \ \ \ \hat{f}\in\LOQH, \ T\in\BLTH.$$
Let $m\in\LIQH^*$ be a left invariant mean, and define $\Phi:\LIQH\oten\LIHH\rightarrow\LIHH$ by
$$\Phi(X)=E((m\ten\id)(UXU^*)), \ \ \ X\in\LIQH\oten\LIHH.$$
Then for $\hat{x}\in\LIHH$ we have
$$\Phi(\Delta(\hat{x}))=E((m\ten\id)((\gamma\ten\id)(W^{\h{\Hb}}\Gam_{\h{\Hb}}(\hat{x})W^{\h{\Hb}^*})))=E((m\ten\id)(1\ten\hat{x}))=\hat{x},$$
so that $\Phi$ is a completely contractive left inverse to $\Delta$. Now, fix $\hat{f}\in\LOQH$ and $X\in\LIQH\oten\LIHH$. Then
\begin{align*}\Phi(X\star_{\h{\G}}\hat{f})&=E((m\ten\id)(U((\hat{f}\ten\id\ten\id)(\Gam_{\h{\G}}\ten\id)(X))U^*))\\
&=E((m\ten\id)(U((\hat{f}\ten\id\ten\id)(W_{12}^{\h{\G}^*}X_{23}W_{12}^{\h{\G}}))U^*))\\
&=E((m\ten\id)(\hat{f}\ten\id\ten\id)(U_{23}W_{12}^{\h{\G}^*}X_{23}W_{12}^{\h{\G}}U_{23}^*))\\
&=E((m\ten\id)(\hat{f}\ten\id\ten\id)(U_{13}^*W_{12}^{\h{\G}^*}U_{23}X_{23}U_{23}^*W_{12}^{\h{\G}}U_{13})) \ \ \ \textnormal{(by (\ref{e:U}))}\\
&=E((\hat{f}\ten\id)(\id\ten m \ten\id)(U_{13}^*W_{12}^{\h{\G}^*}U_{23}X_{23}U_{23}^*W_{12}^{\h{\G}}U_{13}))\\
&=E((\hat{f}\ten\id)(U^*((\id\ten m \ten\id)(W_{12}^{\h{\G}^*}(1\ten UXU^*)W_{12}^{\h{\G}}))U)).\\
\end{align*}
Using the fact that $m$ is a left invariant mean on $\LIQH$, it follows as in \cite[Theorem 5.5]{CN} that
$$(\id\ten m \ten\id)(W_{12}^{\h{\G}^*}(1\ten UXU^*)W_{12}^{\h{\G}})=(\id\ten m \ten\id)(1\ten UXU^*).$$
Thus,
\begin{align*}\Phi(X\star_{\h{\G}}\hat{f})&=E((\hat{f}\ten\id)(U^*((\id\ten m \ten\id)(1\ten UXU^*))U))\\
&=E((\hat{f}\ten\id)((\gamma\ten\id)(W^{\h{\Hb}^*})(1\ten (m\ten\id)(UXU^*))(\gamma\ten\id)(W^{\h{\Hb}})))\\
&=E((\hat{f}\ten\id)(\gamma\ten\id)(W^{\h{\Hb}^*}(1\ten (m\ten\id)(UXU^*))W^{\h{\Hb}}))\\
&=E((\gamma_*(\hat{f})\ten\id)(W^{\h{\Hb}^*}(1\ten (m\ten\id)(UXU^*))W^{\h{\Hb}}))\\
&=E(((m\ten\id)(UXU^*))\lhd_{\h{\Hb}}\gamma_*(\hat{f}))\\
&=E((m\ten\id)(UXU^*))\star_{\h{\Hb}}\gamma_*(\hat{f})\\
&=\Phi(X)\star_{\h{\Hb}}\gamma_*(\hat{f})\\
&=\Phi(X)\star_{\h{\G}}\hat{f}.\\
\end{align*}
Hence, $\Phi$ is a right $\LOQH$-module map, and it follows that $\LIHH$ is relatively 1-injective in $\mathbf{mod}-\LOQH$. Since $\LIHH$ is a 1-injective operator space, \cite[Proposition 2.3]{C} entails the 1-injectivity of $\LIHH$ in $\mathbf{mod}-\LOQH$.

Conversely, if $\LIHH$ is 1-injective in $\mathbf{mod}-\LOQH$ there exists a completely contactive right $\LOQH$-module projection $E:\BLTH\rightarrow\LIHH$. Then $E$ is a right $\LOHH$-module projection, which by \cite[Proposition 4.7]{C} entails the amenability of $\Hb$.\end{proof}

\begin{cor} Let $\G$ and $\Hb$ be locally compact quantum groups such that $\h{\G}$ is amenable and $\Hb$ is a closed amenable quantum subgroup of $\G$ in the sense of Vaes. Then there exists a completely contractive right $\LOQH$-module projection $P:\LIQH\rightarrow\LIHH$.\end{cor}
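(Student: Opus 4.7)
The plan is to deduce the corollary directly from Theorem \ref{t:subgroup} by viewing the desired projection as the 1-injectivity extension of the identity along the inclusion $\gamma:\LIHH\hookrightarrow\LIQH$.

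First I would invoke Theorem \ref{t:subgroup}: the hypotheses ($\h{\G}$ amenable and $\Hb$ a closed amenable quantum subgroup of $\G$) are exactly what is needed to conclude that $\LIHH$ is 1-injective in $\mathbf{mod}-\LOQH$, where the right $\LOQH$-action on $\LIHH$ is the pulled-back action $\hat x\star_{\h{\G}}\hat f=\hat x\star_{\h{\Hb}}\gamma_*(\hat f)$ used in the proof of that theorem.

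Next I would verify that $\gamma:\LIHH\rightarrow\LIQH$ is a completely isometric morphism in $\mathbf{mod}-\LOQH$, with $\LIQH$ carrying its canonical right $\LOQH$-module structure. Complete isometry is immediate since $\gamma$ is a normal unital injective $*$-homomorphism. For the module property, normality of $\gamma$ together with the intertwining relation (\ref{e:CQS}) yields, for $\hat x\in\LIHH$ and $\hat f\in\LOQH$,
\[
\gamma(\hat x\star_{\h{\G}}\hat f)=\gamma\bigl((\gamma_*(\hat f)\ten\id)\Gam_{\h{\Hb}}(\hat x)\bigr)=(\hat f\ten\id)(\gamma\ten\gamma)\Gam_{\h{\Hb}}(\hat x)=(\hat f\ten\id)\Gam_{\h{\G}}(\gamma(\hat x))=\gamma(\hat x)\star_{\h{\G}}\hat f.
\]

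Finally, applying the universal property of 1-injectivity to the completely isometric morphism $\gamma$ and to the morphism $\id_{\LIHH}:\LIHH\rightarrow\LIHH$ produces a completely contractive right $\LOQH$-module map $P:\LIQH\rightarrow\LIHH$ satisfying $P\circ\gamma=\id_{\LIHH}$; identifying $\LIHH$ with $\gamma(\LIHH)\subseteq\LIQH$, this is precisely the desired projection. The entire content of the corollary lies in Theorem \ref{t:subgroup}, so there is no serious obstacle; the proof is a short formal consequence once the module-map property of $\gamma$ has been checked.
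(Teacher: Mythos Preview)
Your proposal is correct and follows the same approach as the paper: invoke Theorem~\ref{t:subgroup} to obtain 1-injectivity of $\LIHH$ in $\mathbf{mod}-\LOQH$, then extend $\id_{\LIHH}$ along the completely isometric inclusion $\gamma$ to produce the projection. Your added verification that $\gamma$ is a right $\LOQH$-module map via (\ref{e:CQS}) is a useful detail that the paper's two-sentence proof leaves implicit.
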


\begin{proof} By Theorem \ref{t:subgroup}, $\LIHH$ is 1-injective as a right $\LOQH$-module. The identity $\id_{\LIHH}$ therefore extends to a completely contractive morphism $P:\LIQH\rightarrow\LIHH$.\end{proof}

Since every co-commutative quantum group is amenable, the following corollary is immediate.

\begin{cor} Let $G$ be a locally compact group and let $H$ be a closed amenable subgroup of $G$. Then there exists a completely contractive $A(G)$-module projection $P:\LG\rightarrow VN(H)$.\end{cor}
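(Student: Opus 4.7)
The plan is to apply the preceding corollary to the specific case of a commutative quantum group. Set $\G := \G_a$ and $\Hb := \Hb_a$, the commutative quantum groups associated with the underlying locally compact groups $G$ and $H$ respectively. Under the duality described in the preliminaries, we have $\widehat{\G_a} = \G_s$ and $\widehat{\Hb_a} = \Hb_s$, both of which are co-commutative. Since every co-commutative quantum group is amenable, the hypothesis $\widehat{\G}$ amenable in the preceding corollary is automatically satisfied.

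Next, I would verify the identifications of objects and module actions. The assumption that $H$ is a closed amenable subgroup of $G$ translates to $\Hb_a$ being a closed amenable quantum subgroup of $\G_a$ in the sense of Vaes; indeed, the inclusion $\gamma: L^\infty(\widehat{\Hb_a}) = VN(H) \hookrightarrow VN(G) = L^\infty(\widehat{\G_a})$ arising from $\lm_H(s) \mapsto \lm_G(s)$ is precisely the map appearing in the definition (\ref{e:CQS}), and amenability of $\Hb_a$ is amenability of $H$. Under these identifications, $L^1(\widehat{\G_a}) = A(G)$, and the canonical right $L^1(\widehat{\G_a})$-module action on $L^\infty(\widehat{\G_a})$ coincides with the standard $A(G)$-module action on $VN(G)$.

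With these identifications in place, the preceding corollary directly yields a completely contractive right $A(G)$-module projection $P: VN(G) \to VN(H)$, which is the desired conclusion. No substantive obstacle arises — the argument is essentially a dictionary translation, and the one-line proof given in the paper (``Since every co-commutative quantum group is amenable, the following corollary is immediate'') confirms that this is the intended route.
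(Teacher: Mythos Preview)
Your proposal is correct and follows exactly the paper's intended route: specialize the preceding corollary to the commutative quantum group $\G_a$, using that its dual $\G_s$ is co-commutative and hence amenable, and that the classical inclusion $VN(H)\hookrightarrow VN(G)$ realizes $\Hb_a$ as a closed (Vaes) quantum subgroup. The paper's one-line justification is precisely this dictionary translation, so there is nothing to add.
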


\begin{remark} The existence of a completely bounded $A(G)$-module projection $P:\LG\rightarrow VN(H)$ was shown in \cite[Theorem 1.3]{FKLS} under the stronger assumption that $G$ is amenable.\end{remark}

\bibliographystyle{amsplain}

\end{document}